\renewcommand{\mod}{\operatorname{mod}\nolimits}
\newcommand{\add}{\operatorname{add}\nolimits}
\newcommand{\repdim}{\operatorname{repdim}\nolimits}
\newcommand{\Hom}{\operatorname{Hom}\nolimits}
\newcommand{\End}{\operatorname{End}\nolimits}
\newcommand{\Ker}{\operatorname{Ker}\nolimits}
\newcommand{\rrad}{\mathfrak{r}}
\newcommand{\rad}{\operatorname{rad}\nolimits}
\newcommand{\Soc}{\operatorname{Soc}\nolimits}
\newcommand{\gldim}{\operatorname{gldim}\nolimits}
\newcommand{\TrD}{\operatorname{TrD}\nolimits}
\newcommand{\op}{{\operatorname{op}\nolimits}}
\newcommand{\pd}{{\operatorname{pd}\nolimits}}
\newcommand{\comp}{\operatorname{\scriptstyle\circ}}
\renewcommand{\L}{\Lambda}
\newcommand{\Z}{{\mathbb Z}}
\renewcommand{\S}{{\mathcal S}}
\newcommand{\extto}{\xrightarrow}
\newtheorem{lem}{Lemma}[section]
\newtheorem{prop}[lem]{Proposition}
\newtheorem{cor}[lem]{Corollary}
\newtheorem{thm}[lem]{Theorem}
\theoremstyle{definition}
\newtheorem*{remark}{Remark}
\begin{document}

\title[Auslander generators of the exterior algebra]{The Auslander generators of the exterior algebra in two variables}
\author[Lada]{Magdalini Lada}
\thanks{The author was partially supported by NFR Storforsk grand No. 167130.}
\address{Magdalini Lada\\Institutt for matematiske fag\\
NTNU\\ N--7491 Trondheim\\ Norway} \email{magdalin@math.ntnu.no}

\date{\today}
\maketitle

\begin{abstract}We compute a complete set of non-isomorphic minimal Auslander generators for the exterior algebra in two variables.
\end{abstract}

\section*{Introduction}

In 1971, Auslander introduced a new homological dimension for artin algebras, called \emph{representation dimension}, which was meant to measure how far an algebra is from having finitely many isomorphism classes of finitely generated indecomposable modules ~\cite{A}. Quite a few years later, there were several publications that motivated the further investigation of the representation dimension. Among those we have ~\cite{Iy}, where Iyama proves that the representation dimension is always finite, ~\cite{IT}, where Igusa and Todorov prove that an artin algebra with representation dimension less than or equal to three satisfies the \emph{finitistic dimension conjecture} and ~\cite{R1}, where Rouquier uses the exterior algebras as examples of artin algebras with arbitrarily large representation dimension.

The last years, there has been an increasing interest in the topic and several researchers have worked on determining the representation dimension of certain classes of algebras. This is usually done by constructing a module, which is a \emph{generator-cogenerator} for the module category and is such that the global dimension of its endomorphism ring is the smallest among the global dimensions of the endomorhsim rings of all modules that are generators-cogenerators for the module category. Such a module is called an \emph{Auslander generator} and in general it is not easy to find. 

Not much is known about the class of Auslander generators of an artin algebra. In ~\cite{L2}, the author showed how we can construct, under certain conditions, a new Auslander generator by mutating a given one. In this paper, we compute all \emph{minimal} Auslander generators for the exterior algebra in two variables, up to isomorphism. This is, to our knowledge, the first non-trivial example where a complete set of minimal Auslander generators is computed.

\section{Background}
Let $\L$ be an artin algebra. We denote by $\mod \L$ the category of finitely generated left $\L$-modules, and by a $\L$-module we will always mean a module in $\mod\L$. For a $\L$-module $M$  we denote by $\add M$ the full subcategory of $\mod\L$, consisting of direct summands of copies of $M$. 

A $\L$-module $M$ is called a \emph{generator-cogenerator} if all the indecomposable projective and the indecomposable injective $\L$-modules are in $\add M$. The \emph{representation dimension} of $\L$, which we denote by $\repdim\L$, is defined as follows
\[\repdim\L=\inf \{\gldim\End_\L(M) \mid M  \ \rm{generator-cogenerator \  for}  \ \mod\L\}.\]
\sloppy A basic $\L$-module $M$ is called an \emph{Auslander generator} if $\gldim\End_\L(M)=\repdim\L$. An Auslander generator is called \emph{minimal} if for any direct summand $N$ of $M$, that does not contain any projective or injective $\L$-modules, we have $\gldim\End_\L(M/N)> \gldim\End_\L(M)$. Note that here, the factor module $M/N$ is the cokernel of the split monomorphism $N\hookrightarrow M$. 

The following result was proved in ~\cite{D}. Recall that, for an artin algebra $\L$, a \emph{node} is a non-projective, non-injective simple $\L$-module $S$, such that the middle term of the almost split sequence starting at $S$ is projective.
\begin{prop}\label{dugas}
Let $\L$ and $\L'$ be two artin algebras with no nodes, and $\alpha\colon \underline{\mod}\L\to\underline{\mod}\L'$ a stable  equivalence. If  the $\L$-module $\L\oplus N$ is an Auslander generator of $\L$, then the $\L'$-module $\L'\oplus \alpha N$ is an Auslander generator of $\L'$.
\end{prop}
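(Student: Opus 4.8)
The plan is to show that the rule $M=\L\oplus N\mapsto M'=\L'\oplus\alpha N$, where $\alpha N$ denotes the module without projective summands representing the image of $N$, defines a bijection --- with inverse induced by $\alpha^{-1}$ --- between the generator-cogenerators of $\mod\L$ and those of $\mod\L'$, and that it preserves the global dimension of the endomorphism ring. Since the representation dimension is the infimum of these global dimensions over all generator-cogenerators, this yields simultaneously $\repdim\L=\repdim\L'$ and the assertion that the infimum is attained at $M$ if and only if it is attained at $M'$; in particular $\L\oplus N$ is an Auslander generator of $\L$ if and only if $\L'\oplus\alpha N$ is one of $\L'$. If $\L$ is semisimple then $\underline{\mod}\L=0$, which forces $\L'$ semisimple and makes the statement trivial, so assume $\L$ is not semisimple.

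First I would check that $M'$ is again a generator-cogenerator. It is a generator because $\L'\in\add M'$. For the cogenerator property, observe that each indecomposable injective $\L'$-module is either projective --- and hence in $\add\L'$ --- or non-projective. By the classical result of Auslander--Reiten that a stable equivalence between Artin algebras without nodes restricts to a bijection between the non-projective indecomposable injectives (a consequence of the preservation of almost split sequences, for which the absence of nodes removes the usual defect), and since every non-projective indecomposable injective $\L$-module is a summand of $N$ (as $M$ is a cogenerator), the $\alpha$-images of these modules --- which exhaust the non-projective indecomposable injectives of $\L'$ --- are summands of $\alpha N$. Hence every indecomposable injective $\L'$-module lies in $\add M'$. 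Running the same argument with $\alpha^{-1}$, and using that $\gldim\End_\L(M)$ depends only on $\add M$, gives the asserted bijection on generator-cogenerators.

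To compare the global dimensions I would invoke Auslander's criterion \cite{A}: for a generator-cogenerator $M$ and $d\ge0$, one has $\gldim\End_\L(M)\le d+2$ if and only if every $X\in\mod\L$ admits an exact sequence $0\to M_d\to\cdots\to M_0\to X\to0$ with $M_i\in\add M$ that stays exact under $\Hom_\L(M,-)$; equivalently, writing $\Omega_M X$ for the kernel of a minimal right $\add M$-approximation $M_0\to X$ --- an epimorphism because $\L\in\add M$ --- the condition reads $\Omega_M^{\,d}X\in\add M$ for every $X$. Because $M$ contains all projectives, membership in $\add M$ is detected in $\underline{\mod}\L$, and $\underline{\add}M=\underline{\add}N$ is carried by $\alpha$ onto $\underline{\add}(\alpha N)=\underline{\add}M'$. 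Thus it suffices to prove that $\alpha$ is compatible with the relative syzygy, that is, $\alpha(\underline{\Omega_M X})\cong\underline{\Omega_{M'}(\alpha X)}$ in $\underline{\mod}\L'$ for all $X$ (and symmetrically for $\alpha^{-1}$): this transfers the condition ``$\Omega_M^{\,d}X\in\add M$ for all $X$'' to the corresponding condition over $\L'$, and hence gives $\gldim\End_\L(M)=\gldim\End_{\L'}(M')$.

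Establishing this compatibility of $\alpha$ with $\Omega_M$ is the heart of the proof and the step I expect to be the main obstacle. The easy half is that, since $\L\in\add M$, a surjection $f\colon M_0\to X$ with $M_0\in\add M$ is a right $\add M$-approximation exactly when $\underline f$ is a right $\underline{\add}M$-approximation in $\underline{\mod}\L$, and this --- together with right-minimality --- is a purely stable notion preserved by $\alpha$; the difficulty is that $\Omega_M X=\Ker f$ depends on data invisible to the stable category, namely the surjectivity of $f$ and the kernel itself. I would deal with this by writing $f$ as a right-minimal stable $\add M$-approximation corrected by a map from a projective module forced by surjectivity, relating $\Ker f$ to the ordinary syzygy $\Omega X$ and to the kernel of the stable approximation, and then using that for algebras without nodes a stable equivalence is compatible with the syzygy operator and with minimal projective covers in the precise sense needed here --- this being exactly the pathology that the no-nodes hypothesis excludes, and the point at which that hypothesis is genuinely used. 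Granting this compatibility, the two preceding steps combine to show that $\L'\oplus\alpha N$ is an Auslander generator of $\L'$.
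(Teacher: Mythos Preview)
The paper does not give a proof of this proposition at all; it simply attributes the result to Dugas \cite{D} and moves on. There is therefore nothing in the paper to compare your argument against.

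For what it is worth, your overall strategy---passing through Auslander's resolution-theoretic description of $\gldim\End_\L(M)$ and reducing to the claim that $\alpha$ intertwines the relative syzygies $\Omega_M$ and $\Omega_{M'}$---is the shape of Dugas's original argument. The soft spot is your final paragraph. Stable equivalences between artin algebras without nodes do \emph{not} in general commute with the ordinary syzygy functor $\Omega$, nor with projective covers; that compatibility is a self-injective phenomenon. What the no-nodes hypothesis genuinely provides (Auslander--Reiten, refined by Mart\'{\i}nez-Villa) is that $\alpha$ lifts short exact sequences up to projectives: given $0\to A\to B\to C\to 0$ with $C$ indecomposable non-projective, there is an exact sequence $0\to \alpha A\to \alpha B\oplus P'\to \alpha C\to 0$ with $P'$ projective. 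Applied to a minimal right $\add M$-approximation sequence of $X$, this is exactly what yields $\underline{\alpha(\Omega_M X)}\cong\underline{\Omega_{M'}(\alpha X)}$. If you replace your appeal to ``compatibility with $\Omega$ and projective covers'' by this exact-sequence transport, the sketch becomes the correct argument.
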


As a straightforward consequence of  Proposition \ref{dugas}, we get the following corollary. Note that $\tau$ denotes the Auslander-Reiten translation and $\Omega$ denotes the syzygy functor. 
\begin{cor}\label{dugascor}
Let $\L$ be a selfinjective algebra with no nodes and $\L\oplus N$ an Auslander generator of $\L$. Then, the $\L$-modules $\L\oplus \tau N$ and $\L\oplus\Omega(N)$ are also Auslander generators.
\end{cor}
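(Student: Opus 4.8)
The plan is to obtain this as the special case $\L'=\L$ of Proposition \ref{dugas}, with $\alpha$ taken to be either $\Omega$ or $\tau$, so the real content is just to check that each of these functors is a stable auto-equivalence of $\underline{\mod}\L$ when $\L$ is selfinjective. For the syzygy functor: since $\L$ is selfinjective, the classes of projective and injective $\L$-modules coincide, so $\Omega\colon\underline{\mod}\L\to\underline{\mod}\L$ is well-defined and has the cosyzygy functor $\Omega^{-1}$ as a quasi-inverse; hence $\Omega$ is a stable self-equivalence. For the Auslander--Reiten translate: for any artin algebra $\tau$ induces an equivalence $\underline{\mod}\L\xrightarrow{\sim}\overline{\mod}\L$ from modules modulo projectives to modules modulo injectives, with quasi-inverse $\Tr D$; selfinjectivity again makes these two quotient categories equal, so $\tau$ is likewise a stable self-equivalence. (Alternatively, one may invoke $\tau\cong\nu\Omega^{2}$, with $\nu$ the Nakayama functor, which is an equivalence of $\underline{\mod}\L$ in the selfinjective case.)

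With this in hand I would apply Proposition \ref{dugas} to $\L'=\L$: the no-nodes hypothesis is required on both the source and target algebra, and it is exactly what we have assumed for $\L$. Feeding in the given Auslander generator $\L\oplus N$ and taking $\alpha=\Omega$ yields that $\L\oplus\Omega N$ is an Auslander generator of $\L$, while taking $\alpha=\tau$ yields that $\L\oplus\tau N$ is an Auslander generator of $\L$, which is the claim.

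One point that should be addressed explicitly is that the object ``$\alpha N$'' produced by Proposition \ref{dugas} a priori lives in the stable category, whereas the statement refers to the honest $\L$-modules $\Omega N$ and $\tau N$. This is harmless: both $\Omega N$ (the kernel of a projective cover of $N$) and $\tau N=D\Tr N$ are genuine modules representing the stable functors evaluated at $N$, and any difference by projective summands is absorbed once we add the projective generator $\L$, since adding projective summands does not change $\add(\L\oplus X)$ and hence does not change $\gldim\End_\L(\L\oplus X)$. Thus $\L\oplus\Omega N$ and $\L\oplus\tau N$ are well-defined up to isomorphism in $\mod\L$ and the conclusion is unambiguous. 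I do not expect any genuine obstacle here; the only thing requiring (standard) justification is the descent of $\Omega$ and $\tau$ to auto-equivalences of the stable category for a selfinjective algebra, and everything else is a direct invocation of Proposition \ref{dugas}.
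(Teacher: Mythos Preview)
Your proof is correct and follows exactly the same approach as the paper: apply Proposition~\ref{dugas} with $\L'=\L$ and $\alpha$ equal to $\tau$ or $\Omega$, using that for a selfinjective algebra both functors induce stable self-equivalences. The paper's proof is a single sentence to this effect; your additional remarks on why $\Omega$ and $\tau$ descend to equivalences and on the harmlessness of projective summands are correct elaborations but not needed beyond what the paper records.
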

\begin{proof}
Since $\L$ is selfinjective, both of the functors $\tau$ and $\Omega$ induce a stable equivalence on $\mod\L$.
\end{proof}

\section{The exterior algebra}

In this section, $\L$ will denote the exterior algebra in two variables. We begin by describing $\L$ as the path algebra of a quiver modulo relations. Let $Q$ be the quiver 
\[\xymatrix@C=20pt{1\ar@(ul,dl)_\alpha\ar@(ur,dr)^\beta}\]
and let $kQ$ be the path algebra of $Q$ over some algebraically closed field $k$. Set $\L=kQ/I$, where $I$ is the ideal of $kQ$ generated by  $\{\alpha^2, \alpha\beta+\beta\alpha, \beta^2\}$.
The quotient $\L/\Soc \L$ is stably equivalent
to the Kronecker algebra (see \cite{ARS}), and using this we can describe the AR-quiver of
$\L$ as follows. For each $p$ in $\mathbb P^1(k)$ there is a tube of rank one

\[\xymatrix@C=0.6cm@R=0.2cm{&& \vdots& & \vdots &&\\
&R_p(5)\ar@{->>}[rd]& & R_p(5)\ar@{->>}[rd] &
& R_p(5)&\\
&& R_p(4)\ar@{->>}[rd]\ar@{^{(}->}[ru]& & R_p(4)\ar@{->>}[rd]\ar@{^{(}->}[ru]&&\\
\cdots & R_p(3)\ar@{->>}[rd]\ar@{^{(}->}[ru]& & R_p(3)\ar@{->>}[rd]\ar@{^{(}->}[ru] & &
R_p(3)&
\cdots \\
& & R_p(2)\ar@{->>}[rd]\ar@{^{(}->}[ru]& & R_p(2)\ar@{->>}[rd]\ar@{^{(}->}[ru]& & \\
&R_p(1)\ar@{^{(}->}[ru]&&R_p(1)\ar@{^{(}->}[ru]&&R_p(1)&\\}
\]

For $p=(1,\lambda)$ the indecomposable module $R_{(1,\lambda)}(n)$, which we will denote by $R_\lambda(n)$ corresponds to the
representation
\[\xymatrix@C=50pt{k^{2n}\ar@(ul,dl)_{f_\alpha^\lambda}\ar@(ur,dr)^{f_\beta^\lambda}}\]
where $f_\alpha^\lambda$ is given by the matrix 
$\left(\begin{smallmatrix}{\mathbf0}_n& {\mathbf0}_n \\ {\mathbf I}_n & {\mathbf0}_n\end{smallmatrix}\right)$ and $f_\beta^\lambda$ is given by the matrix $\left(\begin{smallmatrix}{\mathbf 0}_n& {\mathbf 0}_n \\ {\mathbf J}_n(\lambda) & {\mathbf 0}_n \end{smallmatrix}\right)$. Here ${\mathbf J}_n(\lambda)$ denotes the $n\times n$ Jordan block with eigenvalue $\lambda$. 

For $p=(0,1)$, the indecomposable module
$R_{(1,0)}(n)$, which we will denote by $R(n)$, corresponds to the representation
\[\xymatrix@C=50pt{k^{2n}\ar@(ul,dl)_{f_\alpha}\ar@(ur,dr)^{f_\beta}}\]
where $f_\alpha$ is given by the zero matrix $\mathbf 0_{2n}$ and $f_\beta$ is given by the matrix $\left(\begin{smallmatrix} {\mathbf 0}_n& {\mathbf0}_n \\ {\mathbf I}_n & {\mathbf 0}_n \end{smallmatrix}\right)$.


Besides the tubes, there is one more component that contains the projective-injective
 $\L$-module $\L$ and the simple $\L$-module $S$.

\[\xymatrix@C=-0.6cm@R=0.3cm{&&&&&\makebox[8mm]{$\L$}\ar[ddr]&&&&&&\\
&&&&&&&&&&&\\
&&\makebox[2.4cm]{$\tau\rrad$}\ar@<0.4ex>[ddr]\ar@<-0.4ex>[ddr]&&\makebox[2.4cm]{$\rrad$}\ar@<0.4ex>[ddr]\ar@<-0.4ex>[ddr]\ar[uur]&&\makebox[2.4cm]{$\L/\Soc \L$}\ar@<0.4ex>[ddr]\ar@<-0.4ex>[ddr] & & \makebox[2.4cm]{$\tau^{-1}( \L/\Soc\L)$} \ar@<0.4ex>[ddr]\ar@<-0.4ex>[ddr]&&&\\
\cdots&&&&&&&&&&&\cdots\\
&\makebox[1.2cm]{$\tau^2 S$}\ar@<0.4ex>[uur]\ar@<-0.4ex>[ruu]&&\makebox[1.2cm]{$\tau S$}\ar@<0.4ex>[uur]\ar@<-0.4ex>[ruu]&&\makebox[1.2cm]{$S$}\ar@<0.4ex>[uur]\ar@<-0.4ex>[ruu]&&\makebox[1.2cm]{$\tau^{-1} S$} \ar@<0.4ex>[uur]\ar@<-0.4ex>[ruu]&&\makebox[1.2cm]{$\tau^{-2} S$}&&}
\]
where by $\rrad$ we denote the radical of $\L$. For simplicity, set $S_{2m}=(\TrD)^m S$ and $S_{2m+1}=(\TrD)^m(\L/\Soc\L)$, $m\in \Z$.
For $n>0$, the $\L$-module $S_n$ corresponds to the representation
\[\xymatrix@C=50pt{\makebox[0.5cm]{ $k^{2n+1}$} \ar@(ul,dl)_-{g_\alpha}\ar@(ur,dr)^-{g_\beta}}\]
where $g_\alpha$ is given by the matrix $\left(\begin{smallmatrix} \mathbf 0_n& \mathbf 0_{n\times (n+1)} \\ \mathbf I_n & \mathbf 0_{n\times (n+1)}\\ {\mathbf 0}_{1\times n} & \mathbf 0_{1\times (n+1)}\end{smallmatrix}\right)$ and $g_{\beta}$ is given by the matrix $\left(\begin{smallmatrix} \mathbf 0_n& \mathbf 0_{n\times (n+1)} \\ \mathbf 0_{1\times n} & \mathbf 0_{1\times (n+1)}\\ \mathbf I_n & \mathbf 0_{n\times (n+1)} \end{smallmatrix}\right)$.
Dually for $n<0$, the $\L$-module $S_n$ corresponds to the representation
\[\xymatrix@C=50pt{\makebox[0.5cm]{ $k^{2n+1}$}  \ar@(ul,dl)_-{h_\alpha}\ar@(ur,dr)^-{h_\beta}}\]
where $h_\alpha = g_\alpha^T$ and $h_\beta = g_\beta ^T$. 

We already know by ~\cite{A} that $\repdim\L=3$ and that the $\L$-module $M_0=\L\oplus S_0\oplus \L/\Soc\L$ is an Auslander generator. In the next proposition we give an infinite set of nonisomorphic Auslander generators. Then we prove that this set forms a complete set of non-isomorphic minimal Auslander generators of $\L$. 
\begin{prop}
With the above notation, let $M_n=\L\oplus S_n\oplus S_{n+1}$. Then, $M_n$ is a minimal Auslander generator for all $n$ in $\Z$.
\end{prop}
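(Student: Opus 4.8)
The plan is to bootstrap from the single known Auslander generator $M_0 = \L\oplus S_0\oplus S_1$ (where $S_1 = \L/\Soc\L$) using the homological machinery of Corollary \ref{dugascor}, and then to argue minimality separately. First I would observe that $\L$ is selfinjective and has no nodes: the only simple module $S=S_0$ is non-projective and non-injective, but the middle term of its almost split sequence is $\rrad\oplus(\text{stuff})$ according to the AR-quiver component drawn above, which is visibly not projective (the unique indecomposable projective-injective is $\L$ itself, appearing elsewhere in that component). Hence Corollary \ref{dugascor} applies: if $\L\oplus N$ is an Auslander generator, so are $\L\oplus\tau N$ and $\L\oplus\Omega N$. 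So it suffices to understand how $\tau$ (equivalently $\TrD$, $\DTr$) acts on the modules $S_n$.

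The key structural step is to identify the orbit. By the very definition $S_{2m}=(\TrD)^mS_0$ and $S_{2m+1}=(\TrD)^m(\L/\Soc\L)$, so $\TrD$ sends $S_n\mapsto S_{n+2}$ for every $n\in\Z$; dually $\DTr$ sends $S_n\mapsto S_{n-2}$. That moves the pair $(S_n,S_{n+1})$ to $(S_{n+2},S_{n+3})$, giving all Auslander generators $M_n$ with $n$ of fixed parity from $M_0$ and $M_1$. To bridge the two parities — i.e.\ to get $M_{n+1}$ from $M_n$ — I would use the syzygy functor: for a selfinjective algebra $\Omega$ is a stable self-equivalence, and on the non-projective indecomposables in this AR-component one checks from the explicit representations (the matrices $g_\alpha,g_\beta$ above) that $\Omega S_n \cong S_{n-1}$, or at least that $\Omega$ sends $\{S_n\}$ into itself shifting the index by one. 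Concretely, $\Omega S_0 = \rrad = \tau(\L/\Soc\L)$ sits in the same component one step to the left of $S_1=\L/\Soc\L$; computing $\Omega$ on the representation-theoretic description confirms $\Omega S_n\cong S_{n-1}$ in the stable category. Combining: starting from $M_0=\L\oplus S_0\oplus S_1$, applying $\Omega$ gives $\L\oplus S_{-1}\oplus S_0 = M_{-1}$, an Auslander generator, and iterating $\tau^{\pm1}$ and $\Omega$ produces $M_n$ for every $n\in\Z$. This establishes that each $M_n$ is an Auslander generator, i.e.\ $\gldim\End_\L(M_n)=\repdim\L=3$.

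For minimality, let $N$ be a direct summand of $M_n$ containing no projective or injective summand; since $\L$ is the unique indecomposable projective-injective and $S_n,S_{n+1}$ are neither projective nor injective, $N$ is (up to isomorphism) one of $S_n$, $S_{n+1}$, or $S_n\oplus S_{n+1}$, and $M_n/N$ is correspondingly $\L\oplus S_{n+1}$, $\L\oplus S_n$, or $\L$. I must show each of these has endomorphism ring of global dimension $>3$. The module $\L$ alone has $\End_\L(\L)\cong\L^{\op}$, which is selfinjective of infinite global dimension, so that case is immediate. For $\L\oplus S_n$ (and symmetrically $\L\oplus S_{n+1}$) I would show directly that this is not even a generator-cogenerator in a way that forces finite global dimension — more precisely, since $\gldim\End_\L(M_n)=3=\repdim\L$ is already minimal among \emph{all} generator-cogenerators, it is enough to show $\gldim\End_\L(\L\oplus S_n)$ is not $\le 3$; this follows because a generator-cogenerator $X$ with $\gldim\End_\L(X)\le 3$ would have to resolve every module by a two-step $\add X$-approximation sequence, and one checks that the single module $S_n$ cannot control the cohomology of $S_{n+1}$ (nor of the other $S_m$) in the required way. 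The cleanest route: use that the minimal $\add X$-approximations in $\mod\L$ are governed by the tube combinatorics, and verify that with only one of $S_n,S_{n+1}$ present the relevant $\Ext^2$ over $\End$ does not vanish.

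I expect the main obstacle to be precisely this last point — proving that dropping either $S_n$ or $S_{n+1}$ strictly increases the global dimension of the endomorphism ring. The ``positive'' direction (each $M_n$ is an Auslander generator) is essentially formal once the $\tau$- and $\Omega$-orbit of $\{S_n\}$ is pinned down, which itself is a short matrix computation with the explicit representations. But minimality is not inherited through stable equivalences in any obvious way (Proposition \ref{dugas} transports Auslander generators, not \emph{minimal} ones), so one genuinely has to compute, for the specific modules $\L\oplus S_n$, that $\gldim\End_\L(\L\oplus S_n)>3$ — most likely by exhibiting a module whose minimal $(\add(\L\oplus S_n))$-resolution has length $\ge 3$ as a complex of $\End$-modules, i.e.\ length $\ge 4$, equivalently no $\add(\L\oplus S_n)$-approximation presentation of length $2$ exists for some module. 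I would reduce this to a concrete $\Ext$-computation in the tube containing $S_n$.
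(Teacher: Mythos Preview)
Your argument that each $M_n$ is an Auslander generator is essentially the paper's: reduce to $M_0$ via Corollary~\ref{dugascor} after identifying the $\tau$- and $\Omega$-orbits of the $S_n$. That part is fine.

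The gap is in the minimality step. You assert that ``minimality is not inherited through stable equivalences in any obvious way,'' and therefore propose to verify $\gldim\End_\L(\L\oplus S_n)>3$ for every $n$ separately. But minimality \emph{does} transport here, and the paper uses exactly this: the stable self-equivalence $F$ (a composite of $\tau^{\pm1}$ and $\Omega^{\pm1}$) sending $S_0\oplus S_1$ to $S_n\oplus S_{n+1}$ is invertible and respects direct-sum decompositions, so if $\L\oplus S_n$ (say) were an Auslander generator then applying Corollary~\ref{dugascor} to $F^{-1}$ would make $\L\oplus S_0$ one as well, contradicting minimality of $M_0$. Thus it suffices to check minimality for $M_0$ alone, which the paper does by the direct computation
\[
\gldim\End_\L(\L\oplus S_1)=\gldim\End_\L(\L\oplus S_0)=\gldim\End_\L(\L)=\infty.
\]
Your plan to redo this for each $n$ is unnecessary extra work.

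There is also a genuine confusion in your sketch: you write that you would ``reduce this to a concrete $\Ext$-computation in the tube containing $S_n$.'' But $S_n$ does not lie in any tube; the $S_n$ are precisely the indecomposables in the $\mathbb{Z}A_\infty^\infty$-type component containing $\L$, while the tubes are populated by the regular modules $R_p(m)$. So whatever computation you have in mind cannot be carried out ``in the tube containing $S_n$,'' and the outline you give for showing $\gldim\End_\L(\L\oplus S_n)>3$ is not yet a workable plan. The paper sidesteps all of this by reducing to $n=0$ and asserting the explicit global-dimension computation there.
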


\begin{proof}
For any integer $m$, we have by definition 
\begin{align}M_{2m} & = \L\oplus (\tau^{-1})^m S \oplus (\tau^{-1})^{m} (\L/\Soc \L)\notag\\ & = \L \oplus (\tau^{-1})^m(S\oplus \L/\Soc\L)\notag
\end{align}
and 
\begin{align}
M_{2m+1} & =\L\oplus (\tau^{-1})^m (\L/\Soc \L) \oplus (\tau^{-1})^{m+1} S\notag\\ 
& = \L\oplus (\tau^{-1})^m\Omega^{-1} S \oplus (\tau^{-1})^m\Omega(\L/\Soc\L)\notag \\ 
& = \L\oplus(\tau^{-1})^m\Omega^{-1}(S\oplus \L/\Soc\L) \notag .
\end{align}
Hence, by Corollary \ref{dugascor}, we only need to show that $M_0=\L\oplus S_0\oplus S_1=\L\oplus S \oplus \L/\Soc\L$ is a minimal Auslander generator. We already know, by ~\cite{A} that $M_0$ is an Auslander generator. As for the minimality, straighforward computations show that 
\begin{align}
\gldim\End_\L(M_0/S_0) & =\gldim\End_\L(M_0/S_1)\notag \\ &=\gldim\End_\L(M_0/(S_0\oplus S_1))\notag \\ & =\infty\notag.
\end{align} 
\end{proof}

\begin{remark}
We note that for $n\geq 0$ the Auslander generators $M_n$ can be obtained from $M_0$ by iterated mutation, as described in ~\cite[Section 4]{L2}. Dually for $n\leq 0$, the Auslander generators $M_n$ can be obtained from $M_{-1}$ by iterated mutation.
\end{remark}

The rest of the section is devoted to showing that the $\L$-modules $M_n$, for $n$ in $\Z$, are all the Auslander generators for $\L$, up to isomorphism. 

\smallskip

In the next proposition we compute the global dimension of the endomorphism ring of a generator of $\L$, whose nonprojective summands belong to some tube (not necessarily the same) of the AR-quiver of $\L$.  

We note here that the $\Hom$-spaces between any two indecomposable $\L$-modules can be described using the stable equivalence of $\L/\Soc \L$ and the Kronecker algebra. The $\Hom$-spaces between any two indecomposable modules of the Kronecker algebra have been analytically described in ~\cite[VIII.7]{ARS} and we refer the reader there for details. The only extra morphisms that exist between two indecomposable $\L$-modules of Loewy length 2, are the morphisms that factor through the simple $\L$-module. 

\begin{prop}\label{allreg}
Let $M$ be a generator for $\mod\L$ such that all its non-projective indecomposable summands are of the form $R_p(n)$ for some $p\in\mathbb P^1(k)$ and some $n\in \mathbb N$. Then $\gldim\End_\L(M)=\infty$.
\end{prop}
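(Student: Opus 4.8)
The strategy is to produce a single $\L$-module $Y\notin\add M$ for which $\Hom_\L(M,Y)$, viewed as an $\End_\L(M)$-module, is non-projective but isomorphic to its own first syzygy; then $\pd_{\End_\L(M)}\Hom_\L(M,Y)=\infty$, so $\gldim\End_\L(M)=\infty$. Observe first that $M$ is automatically a generator--cogenerator: $\L$ is self-injective, hence is its own unique indecomposable injective module, and $\L$ is a summand of $M$.

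Since $k$ is infinite, I would choose $p\in\mathbb P^1(k)$ not labelling any of the finitely many tubes that meet the non-projective part of $M$, and take $Y=R_p(1)$; then $R_p(n)\notin\add M$ for all $n$. The first ingredient is that $R_p(1)$ is $\Omega$-periodic of period one. Indeed $R_p(1)=\L/\mathfrak m_p$, where $\mathfrak m_p$ is the $2$-dimensional submodule of $\L$ with $\Soc\L\subseteq\mathfrak m_p\subseteq\rrad$ and $\mathfrak m_p/\Soc\L$ the line $p$ inside $\rrad/\Soc\L$; picking any $\xi\in\mathfrak m_p\setminus\Soc\L$ one checks directly, from the quiver-and-relations presentation of $\L$, that $\mathfrak m_p=\L\xi$ and $\Ann_\L(\xi)=\mathfrak m_p$, whence $\mathfrak m_p\cong\L/\mathfrak m_p=R_p(1)$. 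So the projective cover of $R_p(1)$ gives a short exact sequence
\[
0\longrightarrow R_p(1)\longrightarrow\L\longrightarrow R_p(1)\longrightarrow 0.
\]

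The main work is to show that this sequence stays exact after applying $\Hom_\L(M,-)$, i.e.\ that $\Hom_\L(M,\L)\to\Hom_\L(M,R_p(1))$ is surjective. As $\L$ is injective, $\Ext^1_\L(\L,-)=0$, so it suffices to prove $\Ext^1_\L(R_{p_i}(n_i),R_p(1))=0$ for every non-projective indecomposable summand $R_{p_i}(n_i)$ of $M$. Since the tube at $p_i$ has rank one, $\tau R_{p_i}(n_i)\cong R_{p_i}(n_i)$, so by the Auslander--Reiten formula $\Ext^1_\L(R_{p_i}(n_i),R_p(1))\cong D\,\uHom_\L(R_p(1),R_{p_i}(n_i))$, and it is enough to show $\uHom_\L(R_p(1),R_{p_i}(n_i))=0$. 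A morphism $f\colon R_p(1)\to R_{p_i}(n_i)$ connects modules of Loewy length $2$ lying in distinct tubes, so by the description of $\Hom$-spaces recalled before the proposition it factors through $S$; explicitly, $f$ sends a generator of $R_p(1)$ to some $s_0\in\Soc R_{p_i}(n_i)$. Using the embedding $R_p(1)\cong\mathfrak m_p\hookrightarrow\L$ (generator $\mapsto\xi$), the morphism $f$ factors through the free, hence projective-injective, module $\L$ precisely when $s_0$ lies in the image of multiplication by $\xi$ on $R_{p_i}(n_i)$. But from the explicit matrices describing $R_{p_i}(n_i)$ one sees that multiplication by $\xi$ maps $R_{p_i}(n_i)$ onto $\Soc R_{p_i}(n_i)$ whenever $p\neq p_i$ (on the generic tube this amounts to invertibility of a matrix $a\mathbf I+b\mathbf J_{n_i}(\lambda_i)$, which holds exactly for $p\neq p_i$). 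Hence every such $f$ factors through $\L$, so $\uHom_\L(R_p(1),R_{p_i}(n_i))=0$.

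Granting this, applying $\Hom_\L(M,-)$ to the displayed sequence produces a short exact sequence of $\End_\L(M)$-modules $0\to\Hom_\L(M,R_p(1))\to\Hom_\L(M,\L)\to\Hom_\L(M,R_p(1))\to0$ whose middle term is projective; since $R_p(1)\notin\add M$, the module $\Hom_\L(M,R_p(1))$ is not projective, hence is isomorphic to its own syzygy and has infinite projective dimension over $\End_\L(M)$. Therefore $\gldim\End_\L(M)=\infty$. I expect the genuine obstacle to be the vanishing $\Ext^1_\L(R_{p_i}(n_i),R_p(1))=0$ for $p\neq p_i$ — equivalently, the existence of the factorisation through $\L$ — which is exactly where the structure of the $\Hom$-spaces between Loewy-length-$2$ modules must be combined with the invertibility of $a\mathbf I+b\mathbf J_{n_i}(\lambda_i)$; everything else is formal once one has the periodicity $\Omega R_p(1)\cong R_p(1)$.
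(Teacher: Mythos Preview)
Your proof is correct and takes a genuinely different route from the paper's.

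The paper fixes a tube that \emph{does} meet $M$: choosing $n$ maximal with $R_p(n)\in\add M$, it exhibits a non-identity endomorphism $f_n\colon R_p(n)\to R_p(n)$ (a rank-one map into the deepest socle layer) that factors through no summand of $M$, giving a loop in the Gabriel quiver of $\End_\L(M)$; Igusa's no-loops theorem then forces infinite global dimension. You instead pick a tube \emph{not} meeting $M$ (possible since $k$ is infinite and $M$ has finitely many summands), use the periodicity $\Omega R_p(1)\cong R_p(1)$, and show that $0\to R_p(1)\to\L\to R_p(1)\to 0$ stays exact under $\Hom_\L(M,-)$, producing a non-projective $\End_\L(M)$-module equal to its own syzygy. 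The paper's argument is intrinsic to $M$ and needs no unused tube; yours bypasses the no-loops theorem and the case-analysis that $f_n$ fails to factor, reducing everything to the clean vanishing $\Ext^1_\L(R_q(m),R_p(1))=0$ for $q\neq p$, which you obtain via the Auslander--Reiten formula and the invertibility of $a\mathbf I+b\mathbf J_m(\lambda)$. Both approaches ultimately rest on the same structural fact (no stable maps between distinct tubes), just deployed in opposite directions.

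One trivial slip: you write ``As $\L$ is injective, $\Ext^1_\L(\L,-)=0$'' --- this vanishing holds because $\L$ is \emph{projective}. It does not affect the argument.
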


\begin{proof}
Let $p\in\mathbb P^1(k)$ such that there is an indecomposable direct summand of $M$
isomorphic to $R_p(n)$. Choose $n$ to be the largest natural number such that $R_p(n)$ is isomorphic to
a direct summand of $M$. We show by induction that on the quiver of $\End_\L(M)^{\op}$
there is a loop at the vertex corresponding to the simple module $\S_{R_p(n)}$, where by $\S_{R_p(n)}$ we denote the top of the indecomposable projective $\End_\L(M)^{\op}$-module $\Hom_\L(M, R_p(n))$. In
particular we show that there is a morphism $f_n\colon R_p(n)\to R_p(n)$, which is not
the identity, that does not factor through any indecomposable summand of $M$. Considering
the previous representation of the module $R_p(n)$, the morphism $f_n$ is given by the matrix 
\[A_{f_n}=\left(
\begin{array}{ccccc}
0&0&\cdots&0&0\\
\vdots&\vdots&&\vdots&\vdots\\
0&0&\cdots&0&0\\
1&0&\cdots&0&0
\end{array}\right)
\] 
If $M$ has an indecomposable direct summand $R_p(m)$, then by the choice of $n$, we have $m\leq n$ and it is not hard to see, looking at all possible morphsisms between $R_p(m)$ and $R_p(n)$, that in this case $f_n$ does not factor through $R_p(m)$. For $p$ different from $q$ and for any $n$ and $m$, the only nonzero morphsims from $R_p(n)$ to $R_q(m)$, and vice versa, factor through the simple $\L$-module $S$. Using this fact it is easy to see that $f_n$ does not factor through any other indecomposable summand of $M$ of
the form $R_q(m)$,  for any $m$ and $q\neq p$. We show by induction on $n$ that $f_n$ does not factor through $\L$ either.

Let $n=1$. It is not hard to see that the composition of any morphism  from $\Hom_\L(R_p(1),\L)$ with any morphism from $\Hom_\L(\L,R_p(1))$ is zero. It
follows that $f_1\colon R_p(1)\to R_p(1)$ does not factor through $\L$. Assume that
$f_k\colon R_p(k)\to R_p(k)$ does not factor through $\L$. We have the following
factorization of $f_k$
\[
\xymatrix{R_p(k)\ar[r]^{f_k}\ar@{^{(}->}[d]^{i}& R_p(k)\\
R_p(k+1)\ar[r]^{f_{k+1}}&R_p(k+1)\ar@{->>}[u]^{\pi}}\]
where $i$ and $\pi$ are the natural inclusion and natural projection respectively. Thus, we see that if $f_{k+1}$ factors through $\L$ then so does $f_k$. Hence $f_n$ does not factor through $\L$ for any $n$ in $\mathbb N$. Hence we have proven that the quiver of $\End_\L(M)$, has a loop at the vertex of the simple module corresponding to $R_p(n)$. This implies that $\gldim\End_\L(M)=\infty$ ~\cite{Ig}.
\end{proof}

The above proposition shows that the set of the indecomposable non-projective summands of an Auslander generator of $\L$ must contain at least one module isomorphic to $S_n$ for some $n$. Next, we consider the case where a generator of $\L$ has exactly one indecomposable direct summand isomorphic to $S_n$ for some integer $n$. 

\begin{prop}\label{oneprep}
Let $M$ be a generator for $\mod\L$ such that it has exactly one indecomposable direct summand isomorphic to $S_n$ for some integer $n$. Then $\gldim\End_\L(M)\geq 4$.
\end{prop}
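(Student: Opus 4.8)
The plan is to argue by contradiction: suppose $M$ is a generator with a unique indecomposable summand of the form $S_n$, say $S_n$ itself, and suppose $\gldim\End_\L(M)\le 3$. By Proposition~\ref{allreg} we already know that some $S_n$ must occur, and by the analysis of the AR-quiver, the module $S_n$ sits in the component containing the projective-injective $\L$; it has Loewy length $3$ when $n=0$ (it is the simple $S$) or more generally is the unique summand lying ``off the tubes''. I would first pin down, using the stable equivalence of $\L/\Soc\L$ with the Kronecker algebra together with the remark preceding Proposition~\ref{allreg} about the extra maps factoring through $S$, exactly which indecomposables admit nonzero maps to and from $S_n$, and in particular compute the almost split sequence ending at $S_n$ (equivalently starting at $S_{n}$, since $\L$ is selfinjective and we may translate). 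The middle term of that AR-sequence involves the ``big'' modules $\tau^{i}\rrad$, $\tau^{i}(\L/\Soc\L)$ in the non-tube component, none of which — except possibly $S_{n\pm 1}$ — can be summands of $M$ by hypothesis together with Proposition~\ref{allreg} forcing all other non-projective summands into tubes.

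Next I would construct the minimal projective resolution of the simple $\End_\L(M)^{\op}$-module $\S_{S_n}$ at the top of $\Hom_\L(M,S_n)$. The first syzygy is governed by $\rad\Hom_\L(M,S_n)$, i.e.\ by the non-invertible maps into $S_n$ from summands of $M$; the second syzygy requires understanding maps into those source modules, and so on. The strategy is to show that this resolution cannot terminate by step $3$: concretely, I expect to exhibit a loop or an unavoidable long string in the quiver of $\End_\L(M)$ at the vertex $\S_{S_n}$, much as in the proof of Proposition~\ref{allreg}, arising from an endomorphism or a length-two ``almost split'' type map at $S_n$ that fails to factor through the available summands. Because $S_n$ is the only summand of its kind, any attempt to resolve the relevant syzygy is forced either back through $S_n$ (giving a loop, hence $\gldim=\infty$, contradicting $\le 3$) or through a module in a tube, whose maps to the non-tube component all factor through $S$ and hence through $S_n$ — reintroducing $\S_{S_n}$ in the resolution at a point where it must already have appeared. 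Tracking these factorizations carefully should yield a nonzero third Ext-group $\Ext^{4}_{\End_\L(M)}(\S_{S_n},-)\neq 0$, i.e.\ $\gldim\End_\L(M)\ge 4$.

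The main obstacle, and where the real work lies, is the bookkeeping in the previous paragraph: one must control the minimal right $\add(M/S_n)$-approximations of $S_n$ and of the modules appearing in its syzygies, and this depends on the (finite but not a priori bounded) list of tube-summands and projective summands that $M$ may contain. I would handle this by the same device as in Proposition~\ref{allreg}: reduce to the ``extremal'' summands in each relevant tube (the $R_p(m)$ with $m$ maximal), use that morphisms between different tubes and between a tube and the non-tube component all factor through $S$, and use the explicit matrix descriptions of $S_n$, $R_p(m)$ and $\L$ given above to check non-factorization of the crucial maps. The selfinjectivity of $\L$ (so $\Omega$ and $\tau$ are stable equivalences, Corollary~\ref{dugascor}) lets me, if convenient, replace $S_n$ by $S_0=S$ and reduce to a single explicit computation with $M_0$-like data, after which the general $n$ follows. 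I do not expect any conceptual difficulty beyond this approximation-theoretic accounting; the conclusion $\gldim\End_\L(M)\ge 4$ drops out once the third syzygy of $\S_{S_n}$ is shown to be non-projective.
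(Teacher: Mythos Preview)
Your overall plan---reduce to $n=0$ via Corollary~\ref{dugascor}, then compute the minimal projective resolution of the simple $\Gamma=\End_\L(M)^{\op}$-module $\S_S$ and show its third syzygy is not projective---is exactly the paper's strategy. But the concrete mechanism you propose for producing that resolution is off, and one of the facts you intend to lean on is false.

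The sentence ``morphisms between different tubes and between a tube and the non-tube component all factor through $S$'' is only half right. Over the Kronecker algebra there are plenty of nonzero maps from preprojectives to regulars and from regulars to preinjectives, and these survive to nonzero maps in $\mod\L$ that do \emph{not} factor through the simple. Consequently the first step of the resolution after the projective cover $\Hom_\L(M,S)$ is \emph{not} governed by the almost split sequence at $S$ as you suggest; it is governed by the tube summands of $M$. The paper writes down an explicit epimorphism
\[
\underline g=(g_1,\ldots,g_m)\colon R_{p_1}(n_1)\oplus\cdots\oplus R_{p_m}(n_m)\longrightarrow S,
\]
where the $R_{p_i}(n_i)$ are the \emph{maximal} tube summands of $M$, checks that $\Hom_\L(M,\underline g)$ has cokernel exactly $\S_S$, and then proves by a two-variable induction (on $m$ and on $n_1$) that $\Ker\underline g\cong S_{N-1}$ with $N=\sum_i n_i$. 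This gives $\Omega^2_\Gamma(\S_S)\cong\Hom_\L(M,S_{N-1})$. A second explicit step---the $\add M$-approximation of $S_{N-1}$ by its projective cover together with the inclusion of its socle---produces $\Omega^3_\Gamma(\S_S)\cong\Hom_\L(M,S_{-(N-1)})$. After disposing separately of the degenerate cases $M=\L\oplus S$ and $M=\L\oplus S\oplus R_p(1)$ (both have infinite global dimension), one has $N-1\geq 1$, so neither $S_{N-1}$ nor $S_{-(N-1)}$ is in $\add M$, whence $\pd_\Gamma\S_S\geq 4$.

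So the ``loop at $\S_{S_n}$'' heuristic and the AR-sequence at $S_n$ are red herrings here; the real content is identifying the two short exact sequences above and computing their kernels. Your proposal does not yet contain either ingredient, and the incorrect factorization claim would prevent you from finding the first one.
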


\begin{proof}
Due to Corollary \ref{dugascor}, we can assume that $S_n=S_0=S$. Moreover, since $\gldim\End_\L(\L\oplus S)=\infty$, we can also assume that $M$ has at least one
more nonprojective indecomposable direct summand besides $S$. Set $\Gamma=\End_\L(M)^{\op}$. We will compute the projective dimension of the simple $\Gamma$-module $\S_S$, that corresponds to $S$. Let
$p_1,p_2,\ldots,p_m$ in $\mathbb P^1(k)$, be such that there is an indecomposable direct summand of $M$ isomorphic to $R_{p_i}(n_i)$. We choose $n_i$
to be the largest natural number such that $R_{p_i}(n_i)$ is isomorphic to a direct summand of $M$. Straightforward computations show that $\gldim \End_\L(\L\oplus S \oplus R_p(1))=\infty$, for any $p$ in $\mathbb P^1(k)$, so we can assume in addition that if $m=1$, then $n_1> 1$. Let $g_i\colon R_{p_i}(n_i)\to S$ be the morphism given by the matrix 
\[A_{g_i}=\left(\begin{array}{cccc}1&0&\cdots&0
\end{array}\right)\]
We show that we then have an exact
sequence
\[0\to S_{(\sum_{i=1}^{m}n_i)-1}\to R_{p_1}(n_1)\oplus\cdots
\oplus R_{p_m}(n_m)\extto{\underline{g}=(g_1,\ldots,g_m)}S\to 0\]
We use induction on $m$. Let $m=1$. We show  induction on $n_1$ that we have an exact sequence
\[0\to S_{n_1-1}\to R_{p_1}(n_1)\extto{g_1} S\to 0.\]
For $n_1=1$ it is obvious that the sequence 
\[0\to S\to R_{p_1}(1)\extto{g_1} S\to 0\]
is exact. Assume that for $n_1=k$ the sequence 
\[0\to S_{k-1}\to R_{p_1}(k)\extto{g_1} S\to 0\]
is exact and consider the following pushout diagram
\[\xymatrix@C=20pt{&0\ar[d]&0\ar[d]&&\\
0\ar[r]& S_{k-1}\ar[r]\ar[d]&R_{p_1}(k)\ar[r]\ar@{^{(}->}[d]^{i}& S\ar@{=}[d]\ar[r]& 0\\
0\ar[r]& \Ker g_1 \ar[d]\ar [r]& R_{p_1}(k+1)\ar[r]^-{g_1}\ar@{->>}[d]& S\ar[r]& 0\\
& R_{p_1}(1)\ar@{=}[r]\ar[d]& R_{p_1}(1)\ar[d]&&\\
&0&0&&}
\]
where the inclusion $i$ is the irreducible morphism from $R_{p_1}(k)$ to $R_{p_1}(k+1)$. Since the rightmost vertical short exact sequence of the diagram is non-split, the leftmost vertical short exact sequence is also non-split. We claim $\Ker g_1\simeq S_{k}$. First note that since all the morphisms involved in the above diagram are graded, with respect to the grading induced by the radical layers, we can view the above diagram over the Kronecker algebra. If $X$ is an indecomposable direct summand of $\Ker g_1$, then as we see from the leftmost vertical sequence of the diagram, there is a nonzero morphism from $S_{k-1}$, which is a preprojective module, to $X$, and a nonzero morphism from $X$ to $R_{p_1}(1)$, which is a regular module. This means that $X$ has to be either a preprojective module $S_t$ with $t\geq k-1$ or a regular module from the same tube as $R_{p_1}(1)$. Since the sequence $0\to S_{k-1}\to \Ker g_1\to R_{p_1}(1)\to 0$ does not split, it is easy to see, by counting the dimensions of the top and the socle sequence, that $X$ has to be isomorphic to $S_k$. Hence $\Ker g_1\simeq S_k$.

Next assume that for $m=k>1$ there is an exact sequence
\[0\to S_{(\sum_{i=1}^{k}n_i)-1}\to R_{p_1}(n_1)\oplus\cdots
\oplus R_{p_k}(n_k)\extto{\underline{g}=(g_1,\ldots,g_k)}S\to 0\]
Let $j\in\{1,\ldots,k,k+1\}$ and consider the following commutative exact  diagram: 
 
\[\xymatrix@C=25pt{&0\ar[d]&0\ar[d]&&\\
0\ar[r]& S_{(\sum_{\substack{i=1 \\ i\neq j}}^{k+1} n_i)-1}\ar[r]\ar[d]&\sum_{\substack{i=1\\ i\neq j}}^{k+1}R_{p_i}(n_i)\ar[r]\ar@{^{(}->}[d]& S\ar@{=}[d]\ar[r]& 0\\
0\ar[r]& \Ker \underline g \ar[d]\ar [r]&\sum_{i=1}^{k+1}R_{p_i}(n_i) \ar[r]^-{\underline g}\ar@{->>}[d]& S\ar[r]& 0\\
& R_{p_j}(n_j)\ar@{=}[r]\ar[d]& R_{p_j}(n_j)\ar[d]&&\\
&0&0&&}
\]

We view the above diagram over the Kronecker algebra $H$. Then, as we see from the diagram, there is a nonzero morphism form $\Ker \underline g$ to $R_{p_j}(n_j)$, for all $j$ in $\{1,\ldots,k,k+1\}$. But since over the Kronecker algebra we have $\Hom_H(R_p(n),R_q(m))=(0)$, for $p\neq q$ and any $m$ and $n$, we conclude that $\Ker\underline g$ does not contain any summand isomorphic to $R_p(n)$ for any $p$ and $n$. Hence, $\Ker \underline g$ only contains summands isomorphic to $S_n$ for some $n\geq (\sum_{\substack{i=1 \\ i\neq j}}^{k+1} n_i)-1$. The only $n$ such that $S_n$ satisfies the dimension formulas for the top and the socle sequences of the sequence $0\to S_{(\sum_{\substack{i=1 \\ i\neq j}}^{k+1} n_i)-1}\to \Ker \underline{g}\to R_{p_j(n_j)}\to 0$, is $n=(\sum_{i=1}^{k+1} n_i)-1$. Thus, we have that
\[\Ker \underline g\simeq S_{(\sum_{i=1}^{k+1} n_i)-1}.\]
Hence, we have shown that for any $m\geq 0$ we have an exact sequence 
\[0\to S_{(\sum_{i=1}^{m}n_i)-1}\to R_{p_1}(n_1)\oplus\cdots
\oplus R_{p_m}(n_m)\extto{\underline{g}=(g_1,\ldots,g_m)}S\to 0\]

Recall that $n_i$ is chosen to be the largest natural number such that $R_{p_i}(n_i)$ is isomorphic to a direct summand of $M$. Due to this fact it is not hard to see that the cokernel of the morphism 
\[\Hom_\L(M,\underline g)\colon \Hom_\L(M,R_{p_1}(n_1)\oplus\ldots\oplus R_{p_m}(n_m)) \to \Hom_\L(M,S)\]
is the simple module $\S_S$ (see again ~\cite[IIIV.7]{ARS} for the  structure of the $\Hom$-spaces between regular modules over the Kronecker algebra). Since the functor $\Hom_\L(M,-)$ is left exact, we have an exact sequence of $\Gamma$-modules
\begin{eqnarray}0\to\Hom_\L(M,S_{(\sum_{i=1}^{m}n_i)-1}) \to \Hom_\L(M, R_{p_1}(n_1)\oplus\cdots
\oplus R_{p_m}(n_m))\notag \\  \extto{\Hom_\L(M, \underline g)}\Hom_\L(M, S)\to \S_S\to 0 \notag
\end{eqnarray} 
Note that the $\Gamma$-modules $\Hom_\L(M, R_{p_1}(n_1)\oplus\cdots\oplus R_{p_m}(n_m))$ and $\Hom_\L(M, S)$ are projective, hence $\Hom_\L(M,S_{(\sum_{i=1}^{m}n_i)-1})\simeq \Omega_\Gamma^2(\S_S)$. Since we have assumed that if $m=1$, then $n_1>1$, we have that $S_{(\sum_{i=1}^{m}n_i)-1}$ is not in $\add M$, hence $\pd_\Gamma \S_S>2$. To compute the rest of the projective resolution of $\S_S$, we need to find an $\add M$-approximation of the $\L$-module $S_{(\sum_{i=1}^{m}n_i)-1}$. Let \[p\colon P\to S_{(\sum_{i=1}^{m}n_i)-1}\] be the projective cover of $S_{(\sum_{i=1}^{m}n_i)-1}$ and \[i\colon \Soc (S_{(\sum_{i=1}^{m}n_i)-1})\to S_{(\sum_{i=1}^{m}n_i)-1}\] be the natural inclusion. Then, it is easy to verify that the morphism 
\[P\oplus \Soc (S_{(\sum_{i=1}^{m}n_i)-1})\extto{\left(\begin{smallmatrix}p&i
\end{smallmatrix}\right)} S_{(\sum_{i=1}^{m}n_i)-1}\] is the minimal $\add M$-approximation of $S_{(\sum_{i=1}^{m}n_i)-1}$. Moreover, straightforward computation shows that 
\[\Ker (p \ i)\simeq S_{-(\sum_{i=1}^{m}n_i)+1}\]
Hence, we have an exact sequence
\begin{eqnarray}0\to\Hom_\L(M,S_{-(\sum_{i=1}^{m}n_i)+1})\to\Hom_\L(M, P\oplus \Soc (S_{(\sum_{i=1}^{m}n_i)-1}))\notag \\
\extto{\Hom_\L(M,\left(\begin{smallmatrix}p&i
\end{smallmatrix}\right))} \Hom_\L(M,S_{(\sum_{i=1}^{m}n_i)-1}) \to 0.\notag
\end{eqnarray}
\sloppy Since $S_{-(\sum_{i=1}^{m}n_i)+1}$ is not in $\add M$, we have that the $\Gamma$-module  $\Hom_\L(M,S_{-(\sum_{i=1}^{m}n_i)+1})$, which is isomorphic to $\Omega_\Gamma^3(\S_S)$, is not projective. Hence $\pd_\Gamma (\S_S)\geq 4$, which implies that $\gldim \Gamma\geq 4$. Thus, $\gldim \End_\L(M)\geq 4$.
\end {proof}

So, according to Propositions \ref{allreg} and \ref{oneprep}, the set of non-projective indecomposable summands of an Auslander generator  $M$ of $\L$, must contain at least two modules from the component of the AR-quiver that contains $\L$. We show that if $M$ is in addition minimal, then $M\simeq M_n$, for some integer $n$. We need the following lemma.

\begin{lem}\label{preinj}
For any positive integers $n$ and $k$, there exists a short exact sequence
\[0\to S_{-(n+k+1)}^k \to S_{-(n+k)}^{k+1}\extto{f_{-(n+k)}} S_{-n}\to 0,\] 
where the morphism $f_{-(n+k)}$ has the following property: if $X$ is an indecomposable $\L$-module which is not isomorphic to $S_{-(n+i)}$ for $i=0,\ldots,k-1$, then any morphism $f\colon X\to S_{-n}$, factors through $f_{-(n+k)}$. 
\end{lem}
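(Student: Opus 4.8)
I would prove the lemma by induction on $k$, the case $k=1$ being the almost split sequence ending at $S_{-n}$. Indeed, from the shape of the component of the Auslander--Reiten quiver of $\L$ containing $\L$ one has $\tau S_{-n}=S_{-(n+2)}$, and the middle term of the almost split sequence ending at $S_{-n}$ is $S_{-(n+1)}^2$ (for $n=1$, where $S_{-1}=\rrad$, one checks in addition that the projective--injective $\L$ does not occur in this middle term, since no irreducible morphism $\L\to\rrad$ exists). The property required of $f_{-(n+1)}$ is then precisely the defining property of a minimal right almost split morphism: if $X$ is indecomposable and $X\not\simeq S_{-n}$, then no morphism $X\to S_{-n}$ is a split epimorphism, hence every morphism $X\to S_{-n}$ factors through $f_{-(n+1)}$.

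For the inductive step, assume the sequence $\xi_k\colon 0\to S_{-(n+k+1)}^k\to S_{-(n+k)}^{k+1}\extto{f_{-(n+k)}}S_{-n}\to 0$ together with its factorization property is already known. Let $g\colon S_{-(n+k+1)}^2\to S_{-(n+k)}$ be the middle map of the almost split sequence ending at $S_{-(n+k)}$; it is a minimal right almost split epimorphism with $\Ker g\simeq S_{-(n+k+2)}$. Then $g^{k+1}\colon S_{-(n+k+1)}^{2k+2}\to S_{-(n+k)}^{k+1}$ is an epimorphism with kernel $S_{-(n+k+2)}^{k+1}$, and $h:=f_{-(n+k)}\circ g^{k+1}\colon S_{-(n+k+1)}^{2k+2}\to S_{-n}$ is an epimorphism. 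Being the composite of two epimorphisms, its kernel sits in a short exact sequence
\[0\to S_{-(n+k+2)}^{k+1}\to\Ker h\to S_{-(n+k+1)}^k\to 0.\]

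The key step is to show this sequence splits, i.e.\ that $\Ext^1_\L(S_{-(n+k+1)},S_{-(n+k+2)})=0$. By the Auslander--Reiten formula for the selfinjective algebra $\L$ together with $\tau S_{-(n+k+1)}=S_{-(n+k+3)}$, this is equivalent to $\uHom_\L(S_{-(n+k+2)},S_{-(n+k+3)})=0$, which I would read off from the description of the morphism spaces recalled earlier: between two indecomposable $\L$-modules of Loewy length two lying in this component there is no nonzero "Kronecker" morphism from the shallower to the deeper one, and the only further morphisms factor through $S$ and hence through the projective $\L$, so they vanish in $\uHom_\L$. Granting this, $\Ker h\simeq S_{-(n+k+2)}^{k+1}\oplus S_{-(n+k+1)}^k$, and the summand $S_{-(n+k+1)}^k$ must be cancelled against $S_{-(n+k+1)}^{2k+2}$. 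The inclusion $S_{-(n+k+1)}^k\hookrightarrow\Ker h\hookrightarrow S_{-(n+k+1)}^{2k+2}$ is a monomorphism between direct sums of copies of the indecomposable $S_{-(n+k+1)}$; were it not split, precomposing it with a suitable split monomorphism $S_{-(n+k+1)}\hookrightarrow S_{-(n+k+1)}^k$ would produce a monomorphism $S_{-(n+k+1)}\to S_{-(n+k+1)}^{2k+2}$ all of whose components are non-isomorphisms, hence (by the structure of $\End_\L(S_{-(n+k+1)})$) factor through $S$; but such a morphism factors through $S^{2k+2}$, contradicting that $S_{-(n+k+1)}$ has $k$-dimension $2(n+k+1)+1>2k+2$. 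So the inclusion is split, $S_{-(n+k+1)}^{2k+2}=S_{-(n+k+1)}^k\oplus C$ with $C\simeq S_{-(n+k+1)}^{k+2}$, and $C$ may be chosen to contain the summand $S_{-(n+k+2)}^{k+1}$ of $\Ker h$; then $f_{-(n+k+1)}:=h|_C\colon C\to S_{-n}$ is an epimorphism with kernel $S_{-(n+k+2)}^{k+1}$, which is the sequence $\xi_{k+1}$.

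It remains to check the factorization property of $f_{-(n+k+1)}$, which follows from that of $f_{-(n+k)}$: given an indecomposable $X$ with $X\not\simeq S_{-(n+i)}$ for $i=0,\dots,k$ and a morphism $f\colon X\to S_{-n}$, the inductive hypothesis gives a factorization $f=f_{-(n+k)}\circ u$ with $u\colon X\to S_{-(n+k)}^{k+1}$; since $X\not\simeq S_{-(n+k)}$, each component of $u$ factors through the minimal right almost split morphism $g$, so $u$ factors through $g^{k+1}$, whence $f$ factors through $h$ and therefore through $f_{-(n+k+1)}=h|_C$. The one genuinely delicate point is the vanishing $\uHom_\L(S_{-(n+k+2)},S_{-(n+k+3)})=0$ (equivalently, the splitting of the sequence for $\Ker h$): this is where the explicit structure of the morphism spaces in the relevant component — the Kronecker morphisms together with the extra morphisms through $S$ — must be used, and it is the main obstacle; once it is in hand, the rest is bookkeeping with Krull--Schmidt and almost split sequences.
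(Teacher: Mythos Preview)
Your argument is correct and follows the same inductive strategy as the paper: the base case $k=1$ is the almost split sequence ending at $S_{-n}$, and the inductive step combines the sequence $\xi_k$ with $k+1$ copies of the almost split sequence ending at $S_{-(n+k)}$. The paper organises the inductive step a bit more economically: instead of forming the composite $h=f_{-(n+k)}\circ g^{k+1}$ and analysing $\Ker h$, it lifts the monomorphism $S_{-(n+k+1)}^{k}\hookrightarrow S_{-(n+k)}^{k+1}$ from $\xi_k$ through $g^{k+1}$ and observes directly---using only that $\dim_k\Hom_H(S_{-(n+k+1)},S_{-(n+k+1)})=1$ over the Kronecker algebra---that this lift is a split monomorphism; a single pullback diagram then produces $\xi_{k+1}$. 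Your route requires two separate splitting arguments (the $\Ext^1$-vanishing via the Auslander--Reiten formula, and then the split-mono argument for $S_{-(n+k+1)}^k\hookrightarrow S_{-(n+k+1)}^{2k+2}$), the second of which is essentially the paper's single step in disguise; so your detour through $\Ext^1$ is correct but not needed.

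One small imprecision: the assertion that ``$C$ may be chosen to contain the summand $S_{-(n+k+2)}^{k+1}$ of $\Ker h$'' is not justified and need not hold for an arbitrary complement $C$. What you actually need---and what does hold for \emph{any} complement $C$ of the split image of $S_{-(n+k+1)}^k$---is that $h|_C$ has kernel \emph{isomorphic} to $S_{-(n+k+2)}^{k+1}$. Indeed, since $h$ vanishes on the split summand $S_{-(n+k+1)}^k$, it factors through the projection onto $C$, and then $\Ker(h|_C)\cong \Ker h\,/\,S_{-(n+k+1)}^k\cong S_{-(n+k+2)}^{k+1}$. With this adjustment your construction of $\xi_{k+1}$ and your verification of the factorisation property go through unchanged.
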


\begin{proof}
We prove the claim using induction on $k$. Let $k=1$. Then the almost split sequence 
\[0\to S_{-(n+2)}\to S_{-(n+1)}^2\to S_{-n} \to 0\]
has the desired properties, so we can choose $f_{-(n+1)}$, to be the minimal right almost split morphism ending at $S_{-n}$. Assume that for $k=l$ there exists a short exact sequence
\[0\to S_{-(n+l+1)}^l \extto{g} S_{-(n+l)}^{l+1}\extto{f_{-(n+l)}} S_{-n}\to 0,\] 
with the property described in the statement of the lemma. Let 
\[0\to S_{-(n+l+2)}^{l+1}\to S_{-(n+l+1)}^{2(l+1)}\extto{\epsilon} S_{-(n+l)}^{l+1}\to 0\]
be the direct sum of $l+1$ copies of the almost split sequence ending at $S_{-(n+l)}$. Then, there exists a morphism $h\colon S_{-(n+l+1)}^l\to S_{-(n+l+1)}^{2(l+1)}$ such that $\epsilon\comp h =g$. Viewing these morphisms over the Kronecker algebra $H$, and using that $\dim_k\Hom_H(S_{-(n+l+1)},S_{-(n+l+1)})=1$, we conclude that $h$ is a split monomorphism. Hence, we obtain the following pullback diagram
\[\xymatrix@C=25pt{&&0\ar[d]&0\ar[d]&\\
&& S_{-(n+l+1)}^l \ar@{=}[r] \ar[d]^{h}& S_{-(n+l+1)}^l \ar[d]^{g}& \\
0\ar[r] &S_{-(n+l+2)}^{l+1}\ar[r]\ar@{=}[d]& S_{-(n+l+1)}^{2(l+1)}\ar[r]^{\epsilon}\ar[d]& S_{-(n+l)}^{l+1}\ar[d]^{f_{-(n+l)}}\ar[r]&0\\
0\ar[r]& S_{-(n+l+2)}^{l+1}\ar[r]&S_{-(n+l+1)}^{l+2}\ar[d]\ar[r]^{f_{-(n+l+1)}}&S_{-n}\ar[d]\ar[r]&0\\
&&0&0&}\]
It is easy to verify, from the above commutative diagram, that the morphism $f_{-(n+l+1)}$ has the desired factorization property.
\end{proof}
We are now ready to prove that the $\L$-modules $M_n=\L\oplus S_n\oplus S_{n+1}$, for $n$ in $\Z$, form a complete set of non-isomorphic minimal Auslander generators of $\L$.
\begin{thm}
Let $M$ be a minimal  Auslander generator of $\L$. Then $M$ is isomorphic to $M_n=\L\oplus S_n\oplus S_{n+1}$, for some integer $n$.
\end{thm}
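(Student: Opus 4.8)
The plan is to combine the two already-established Propositions with the structural information about the component containing $\L$ to pin down exactly which pairs $S_i\oplus S_j$ can occur. By Proposition \ref{allreg}, any Auslander generator $M$ must have a non-projective summand outside the tubes, i.e.\ some $S_n$; by Proposition \ref{oneprep}, having exactly one such summand forces $\gldim\End_\L(M)\geq 4>3=\repdim\L$, so an Auslander generator must contain \emph{at least two} modules of the form $S_i$. Using Corollary \ref{dugascor} we may normalise by applying powers of $\tau$ and $\Omega$, so it suffices to show that a minimal Auslander generator whose non-projective summands include $S_0$ and some $S_j$ with $j\ge 1$ (the case $j\le -1$ being dual, and the indexing being symmetric after a shift) must in fact be $M_0=\L\oplus S_0\oplus S_1$ up to isomorphism; equivalently, that if $j\geq 2$ or if there are further summands, then either $\gldim\End_\L(M)>3$ or $M$ fails to be minimal.

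First I would dispose of any tube summands: using Lemma \ref{preinj} (and its dual producing the analogous sequences $0\to S_{n+k+1}^k\to S_{n+k}^{k+1}\to S_n\to 0$ with the stated factorisation property on the \emph{preprojective} side) together with the exact sequences from the proof of Proposition \ref{oneprep} expressing $S$ as a cokernel of a map from regulars, one shows that the $R_p(n)$ summands only ever contribute a projective resolution of length $\leq 2$ over $\Gamma=\End_\L(M)^{\op}$ and moreover are not needed for minimality once two $S_i$'s are present; so a minimal $M$ has no tube summands at all, i.e.\ $M=\L\oplus\bigoplus_{i\in T}S_i$ for some finite $T\subseteq\Z$ with $|T|\geq 2$. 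Next, for such an $M$ I would compute $\gldim\Gamma$ directly. The key computation is the projective resolution of the simple $\Gamma$-module $\S_{S_i}$ for $i$ an \emph{extreme} index in $T$ (say the largest, after the normalisation): using Lemma \ref{preinj} one gets a minimal right $\add M$-approximation of $S_i$, whose kernel is again of the form $S_j$ with $j$ outside $T$ (by the factorisation property, the approximation ``jumps'' to the nearest available $S$), and iterating produces an unbounded projective resolution exactly when the indices in $T$ are not consecutive or when there are gaps — forcing $\gldim\Gamma=\infty$ unless $T=\{n,n+1\}$ for some $n$. Conversely for $T=\{n,n+1\}$ the same approximation analysis shows the resolutions of all simples terminate by step $3$, recovering $\gldim\Gamma=3$ (which we already know from \cite{A} for $n=0$ and hence for all $n$ by Corollary \ref{dugascor}).

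The main obstacle I expect is the bookkeeping in the approximation/syzygy computation over $\Gamma$ when $T$ has more than two elements: one must track, for each simple $\S_{S_i}$, how the minimal $\add M$-approximations of the relevant $\L$-modules $S_j$ interleave, and show that \emph{some} simple has infinite projective dimension whenever $T\neq\{n,n+1\}$. The clean way to organise this is to argue that if $i\in T$ is extremal then $\Omega^2_\Gamma\S_{S_i}=\Hom_\L(M,S_{i'})$ for the next index $i'$ past $i$ with $S_{i'}\notin\add M$ (via Lemma \ref{preinj}), and then $\Omega_\Gamma^3\S_{S_i}=\Hom_\L(M,S_{i''})$ where $S_{i''}$ is obtained from $S_{i'}$ by the socle-and-projective-cover approximation as in Proposition \ref{oneprep}; chasing indices shows $i''$ also lies outside $T$ on the same side when $T$ is not an interval of length $2$, so the process never hits $\add M$ and $\pd_\Gamma\S_{S_i}=\infty$. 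Assembling these pieces — no tube summands by minimality, and $T$ forced to be $\{n,n+1\}$ — yields $M\simeq M_n$, completing the proof. For the dual ranges one appeals to the duality $D\colon\mod\L\to\mod\L^{\op}\simeq\mod\L$ interchanging the $S_n$ for $n>0$ and $n<0$, which is compatible with the constructions above.
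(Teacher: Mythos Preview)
Your strategy is in the right spirit (compute syzygies over $\Gamma=\End_\L(M)^{\op}$ via right $\add M$-approximations and Lemma~\ref{preinj}), but there are two genuine gaps, and the paper's proof is organised differently in a way that avoids both.

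\medskip

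\textbf{Gap 1: the elimination of tube summands is not justified.} You assert that once two $S_i$'s are present the $R_p(n)$ summands ``are not needed for minimality'', but that is precisely the content of the theorem for such summands and cannot be assumed. The paper does \emph{not} dispose of tube summands separately. Instead it resolves the simple $\S_\L$ corresponding to the projective $\L$ (not $\S_{S_i}$): the radical of $\L$ is $S_{-1}$, and after applying a suitable power of $\tau$ so that every $S$-type summand has index $<-1$, Lemma~\ref{preinj} gives a right $\add\widetilde M$-approximation $S_{m'}^{-m'}\to S_{-1}$ with kernel $S_{m'-1}^{-(m'+1)}$, where $S_{m'}$ is the $S$-summand of largest index. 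The factorisation property in Lemma~\ref{preinj} is stated for \emph{all} indecomposables $X\not\simeq S_{-(n+i)}$, so regular summands are automatically handled; there is no need to argue them away first.

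\medskip

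\textbf{Gap 2: the claim ``$\gldim\Gamma=\infty$ unless $T=\{n,n+1\}$'' is not what one proves, and is likely false as stated.} If $T$ is a longer consecutive interval, the resulting module may still have $\gldim\Gamma=3$; it simply fails to be \emph{minimal}. The paper's argument correctly yields a minimality contradiction rather than a global-dimension obstruction: one gets $\Omega^2_\Gamma(\S_\L)=\Hom_\L(\widetilde M,S_{m'-1}^{-(m'+1)})$, and if $S_{m'-1}\notin\add\widetilde M$ a second application of Lemma~\ref{preinj} (now approximating $S_{m'-1}$ by the next available $S_{n'}$) gives $\Omega^3_\Gamma(\S_\L)=\Hom_\L(\widetilde M,S_{n'-1}^{\ast})$. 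Since $\widetilde M$ is an Auslander generator, $\gldim\Gamma=3$ forces $S_{n'-1}\in\add\widetilde M$; but then $\widetilde M$ contains $S_{n'-1},S_{n'},S_{m'}$ and the summand $M_{n'-1}=\L\oplus S_{n'-1}\oplus S_{n'}$ is already an Auslander generator, contradicting minimality. Hence $S_{m'-1}\in\add\widetilde M$, i.e.\ $m'=n'+1$, and minimality again forces $\widetilde M\simeq M_{n'}$.

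\medskip

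In short, the two key choices in the paper --- resolve $\S_\L$ rather than an extremal $\S_{S_i}$, and extract a contradiction with minimality rather than aim for $\gldim=\infty$ --- are what make the argument go through uniformly, without the separate (and unjustified) treatment of tube summands that you propose.
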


\begin{proof}
Let \[M=\L\oplus M_1\oplus\cdots\oplus M_s,\] where $M_i$ is indecomposable non-projective for $i=1,\ldots,s$. In view of Propositions \ref{allreg} and \ref{oneprep} the set of the indecomposable non-projective direct summands of $M$ must contain at least two modules from the component of the AR-quiver of $\L$ that contains $\L$. So we can assume that $M_1\simeq S_m$ and $M_2\simeq S_n$, where $n$ and $m$ are such that $n<m$, and if one of the modules $M_i$, for $i=3,\ldots,s$, is isomorphic to $S_l$, then $l<n$. Let $i$ be an integer such that $m-2i<-1$ and consider the module 
\[\widetilde{M}=\L\oplus \tau^i(M_1\oplus\cdots\oplus M_s).\]
By Corollary \ref{dugascor}, we have that $\widetilde M$ is also a minimal  Auslander generator. 
Moreover \[\tau^i M_1\simeq \tau^i S_m\simeq S_{m-2i}\] and \[\tau^i M_2\simeq \tau^i S_n\simeq S_{n-2i}.\] Set $m-2i=m'$ and $n-2i=n'$. Then $n'<m'<-1$. In order to prove the claim of the theorem, we compute a projective resolution of the simple $\End_\L(\widetilde M)^{\op}$-module $\S_{\L}$, that corresponds to $\L$. We show that $\pd \S_\L\leq 3$ if and only if $m'=n'+1$. 

We first need to compute an $\add \widetilde M$-approximation of $\rad \L=S_{-1}$. Let
\[0\to S_{m'-1}^{-(m'+1)}\to S_{m'}^{-m'}\extto{f_{m'}} S_{-1}\to 0\]
be the short exact sequence that we get from Lemma \ref{preinj} for $n=-1$ and $k=-m'+1$. Then, by the choice of $m'$, the morphism $f_{m'}$ is a right $\add\widetilde M$-approximation of $S_{-1}$. Hence, applying the functor $\Hom_\L(\widetilde M, -)$ to the short exact sequence above, we get the short exact sequence
\[0\to\Hom_\L(\widetilde M, S_{m'-1}^{-(m'+1)})\to\Hom_\L(\widetilde M, S_{m'}^{-m'})\to \Hom_\L(\widetilde M, S_{-1})\to 0.\]
But $S_{-1}=\rrad_\L$ and the natural inclusion $i\colon S_{-1}\to \L$ is the right almost split morphism ending at $\L$, so we have a short exact sequence
\[0\to\Hom_\L(\widetilde M, S_{-1})\to \Hom_\L(\widetilde M, \L)\to \S_{\L}\to 0.\]
Moreover, $\L$ and $S_{m'}$ are in $\add \widetilde M$, which implies that $\Hom_\L(\widetilde M, \L)$ and $\Hom_\L(\widetilde M, S_{m'}^{-m'})$ are projective $\End_\L(\widetilde M)^{\op}$-modules. Hence, \[\Hom_\L(\widetilde M, S_{m'-1}^{-(m'+1)})=\Omega_{\End_\L(\widetilde M)^{\op}}^2(\S_{\L}).\]

Now, assume that $S_{m'-1}$ is not in $\add \widetilde M$. We will show that this assumption leads to a contradiction. To continue the projective resolution of $\S_\L$, we need to compute a right $\add \widetilde M$-approximation of $S_{m'-1}$. Let 
\[0\to S_{n'-1}^{m'-n'-1}\to S_{n'}^{m'-n'}\extto{f_{n'}} S_{m'-1}\to 0\]
be the short exact sequence that we get from Lemma \ref{preinj} for $n=-(m'-1)$ and $k=m'-n'-1$. Then, by the choice of $m'$ and $n'$, the morphism $f_{n'}$, is a right $\add \widetilde M$-approximation of $S_{m'-1}$. Hence, applying the functor $\Hom_\L(\widetilde M, -)$, we get the short exact sequence
\[0\to\Hom_\L(\widetilde M, S_{n'-1}^{m'-n'-1})\to\Hom_\L(\widetilde M, S_{n'}^{m'-n'})\to \Hom_\L(\widetilde M, S_{m'-1})\to 0.\]
Since $S_{n'}$ is in $\add \widetilde M$, the $\End_\L(\widetilde M)^{\op}$-module $\Hom_\L(\widetilde M, S_{n'}^{m'-n'})$ is projective. Hence,
\[\Hom_\L(\widetilde M, S_{n'-1}^{m'-n'-1})^{-(m'+1)}=\Omega_{\End_\L(\widetilde M)^{\op}}^3(\S_{\L}).\]

Since $\widetilde M$ is an Auslander generator, we have that $\gldim\End\L(\widetilde M)^{\op}=3$, so $\pd_{\End_\L(\widetilde M)^{\op}}\S_{\L}\leq 3$. Hence, $\Hom_\L(\widetilde M, S_{n'-1}^{m'-n'-1})^{-(m'+1)}$ is projective which implies that $S_{n'-1}$ is in $\add \widetilde M$. But then $\widetilde M$ contains as direct summands the modules $S_{n'-1}$, $S_{n'}$ and $S_{m'}$, which contradicts the minimality of $\widetilde M$, since the module $M_{n'-1}= \L\oplus\S_{n'-1}\oplus S_{n'}$ is already an Auslander generator.

Hence, the $\L$-module $S_{m'-1}$ is in $\add \widetilde M$, and since $\widetilde M$ is a minimal Auslander generator, we have that $m'=n'+1$
\[\widetilde M \simeq \L \oplus S_{n'}\oplus S_{n'+1}\simeq M_{n'}\]
and then, 
\[M\simeq \L\oplus S_n\oplus S_{n+1}\simeq M_{n}.\]
\end{proof}



We end this section with a remark on the number of the non-projective indecomposable direct summands of the minimal Auslander generators of $\L$. In ~\cite{R2}, Rouquier proved that for a selfinjective algebra $A$ with $\repdim A=3$, the number of the non-projective indecomposable direct summands $m$ of an Auslander generator $M$, is  at least half of the number of the isomorphism classes of simple $A$-modules $n$. In ~\cite{D}, Dugas improved this result by showing that $m\geq (2/3)n$, and if, in addition, $A$ is weakly symmetric or of Loewy length three, then $m\geq n$.  In the case of the exterior algebra in two variables $\L$, that we studied here, we have $m=2$, that is twice the number of simple $\L$-modules, for all minimal Auslander generators. This fact might suggest that the bounds given by Dugas can be further improved. Moreover, a natural question that arises is wether the number of the non-projective indecomposable direct summands is the same for all minimal Auslander generators, for any artin algebra.

\end{document}